\newenvironment{proof}[1][Proof: ]{\begin{trivlist}\item[\hskip \labelsep {\bfseries #1}]}{\QED \end{trivlist}}
\newcommand{\QED}{\hfill$\square$}
\begin{document}

    \begin{frontmatter}
        \title{On the ordering of trees by the Laplacian coefficients}
        \author[PMF]{Aleksandar Ili\' c},
        \ead{aleksandari@gmail.com}
        \address[PMF]{Faculty of Sciences and Mathematics, University of Ni\v s, Serbia}
        \journal{Linear Algebra and its Applications}

        \begin{abstract}
        We generalize the results from [X.-D. Zhang, X.-P. Lv, Y.-H. Chen,
        \textit{Ordering trees by the Laplacian coefficients}, Linear Algebra
        Appl. (2009), doi:10.1016/j.laa.2009.04.018] on the partial ordering of trees with
        given diameter. For two $n$-vertex trees $T_1$ and $T_2$, if $c_k (T_1) \leqslant c_k (T_2)$
        holds for all Laplacian coefficients $c_k$, $k = 0, 1, \ldots, n$, we say that $T_1$ is dominated
        by $T_2$ and write $T_1 \preceq_c T_2$. We proved that among
        $n$ vertex trees with fixed diameter $d$, the caterpillar $C_{n, d}$ has minimal Laplacian coefficients $c_k$,
        $k = 0, 1, \ldots, n$.
        The number of incomparable pairs of trees on $\leqslant 18$ vertices is presented, as well as infinite families of
        examples for two other partial orderings of trees, recently proposed by Mohar.
        For every integer $n$, we construct a chain $\{T_i\}_{i = 0}^m$ of $n$-vertex trees of length $\frac{n^2}{4}$,
        such that $T_0 \cong S_n$, $T_m \cong P_n$ and $T_i \preceq_c T_{i + 1}$ for all $i = 0, 1, \ldots, m - 1$.
        In addition, the characterization of the partial ordering of starlike trees is established by the majorization inequalities
        of the pendent path lengths.
        We determine the relations among the extremal trees with fixed maximum degree,
        and with perfect matching and further support the Laplacian coefficients
        as a measure of branching.

        \smallskip
        \noindent {\em AMS Classification:} 05C50; 05C05.
        \end{abstract}

        \begin{keyword}
            Laplacian coefficients; Diameter; Matching; Branching of trees; Partial ordering.
        \end{keyword}
    \end{frontmatter}

\section{Introduction}

Let $G = (V, E)$ be a simple undirected graph with $n = |V|$
vertices and $m = |E|$ edges. The Laplacian polynomial $P
(G,\lambda)$ of $G$ is the characteristic polynomial of its
Laplacian matrix $L (G) = D (G) - A(G)$,
$$
P (G, \lambda) = \det (\lambda I_n - L (G)) = \sum_{k = 0}^n (-1)^k
c_k \lambda^{n - k}.
$$

The Laplacian matrix $L(G)$ has non-negative eigenvalues $\mu_1
\geqslant \mu_2 \geqslant \ldots \geqslant \mu_{n - 1} \geqslant
\mu_n = 0$. From Viette's formulas, $c_k = \sigma_k (\mu_1, \mu_2,
\ldots, \mu_{n - 1})$ is a symmetric polynomial of order $n - 1$. In
particular, we have $c_0 = 1$, $c_n = 0$, $c_1 = 2m$, $c_{n - 1} =
n\tau (G)$, where $\tau (G)$ denotes the number of spanning trees of
$G$ (see \cite{CvDS95} and \cite{Me95}). If $G$ is a
tree, the coefficient $c_{n - 2}$ is equal to its Wiener index, which is
a sum of distances between all pairs of vertices
$$
c_{n - 2} (T) = W (T) = \sum_{u, v \in V} d (u, v),
$$
while the coefficient $c_{n - 3}$ is its modified hyper-Wiener index, introduced by
Gutman in \cite{Gu03}. The Wiener index is considered as one of the most used topological
indices with high correlation with many physical and chemical
properties of molecular compounds (for recent results and
applications of Wiener index see \cite{DoEn01}).

Let $m_k (G)$ be the number of matchings of $G$ containing exactly
$k$ independent edges. The subdivision graph $S (G)$ of $G$ is
obtained by inserting a new vertex of degree two on each edge of~$G$.
Zhou and Gutman \cite{ZhGu08} proved that for every acyclic graph~$T$
with $n$ vertices holds
\begin{equation}
\label{eq-zhou} c_k (T) = m_k (S (T)), \quad 0 \leqslant k \leqslant n.
\end{equation}

Let $T_1$ and $T_2$ be two trees of order $n$.
Denote by $r$ (respectively $s$) the smallest
(respectively largest) integer such that $c_r (T_1) \neq c_r (T_2)$ (respectively $c_s (T_1) \neq c_s(T_2)$).
Two partial orderings may be
defined as follows. If $c_r(T_1) < c_r(T_2)$, we say that $T_1$ is \emph{smaller than} $T_2$ and denote $T_1 \prec^1 T_2$.
If $c_s(T_1) < c_s(T_2)$, we say that $T_1$ is \emph{smaller than} $T_2$ and denote $T_1 \prec^2 T_2$.
We may now introduce relations $\prec_c$ and $\preceq_c$ on the set of $n$-vertex graphs by defining
$$
G \preceq_c H \qquad \Leftrightarrow \qquad c_k (G) \leqslant c_k (H), \ \ k = 0, 1, \ldots, n.
$$
and
$$
G \prec_c H \quad \Leftrightarrow \quad G \preceq_c H \ \ \mbox{and} \ \ c_k (G) < c_k (H) \ \ \mbox{for some} \ \ 1 \leqslant k \leqslant n - 1.
$$

Recently, Mohar on his homepage \cite{Mo06} proposed some problems on ordering trees
with the Laplacian coefficients.

\begin{prob}
Do there exist two trees $T_1$ and $T_2$ of order $n$ such that $T_1 \prec^1 T_2$ and $T_2 \prec^2 T_1$?
\end{prob}

\begin{prob}
Do there exist two trees $T_1$ and $T_2$ of order $n$ such that $T_1 \prec^1 T_2$ and $T_1 \prec^2 T_2$,
but there is an index $i$ such that $c_i(T_1) > c_i(T_2)$?
\end{prob}

\begin{prob}
Let $\mathfrak{T}_n$ be the set of all trees of order $n$.
How large chains and antichains of pairwise non-Laplacian-cospectral trees are there?
\end{prob}

\begin{prob}
Let us define $U (T, T')$ to be the set of all trees $Z$ of order $n = |T| = |T'|$ such that $Z$ majorizes $T$ and $T'$ simultaneously.
For which trees $T$ and $T'$ has $U (T,T')$ only one minimal element up to cospectrality, i.e., when are all minimal elements in $U (T,T')$ cospectral?
\end{prob}

Our goal here is also to add some further evidence to support
the use of Laplacian coefficients as a measure of branching in alkanes.
A topological index acceptable as a measure of branching must satisfy the inequalities \cite{FiGuHRVV02}
$$
TI (P_n) < TI (X_n) < TI (S_n) \qquad \mbox{or} \qquad TI (P_n) > TI (X_n) > TI (S_n),
$$
for $n = 4, 5, \ldots$, where $P_n$ is the path, and $S_n$ is the star on $n$ vertices.
For example, the first relation is obeyed by the largest graph eigenvalue and Estrada index,
while the second relation is obeyed by the Wiener index, Hosoya index and graph energy.
It is proven in \cite{Mo07} and \cite{ZhGu08} that for arbitrary tree $T \not \cong P_n, S_n$ holds
$$
c_k (P_n) > c_k (T) > c_k (S_n),
$$
for all $2 \leqslant k \leqslant n - 2$. We further refine this relation, by introducing
long chain of inequalities.

Stevanovi\' c and Ili\' c in \cite{StIl08} investigated the properties of the Laplacian
coefficients of unicyclic graphs. Guo in \cite{Gu03} presented the several tree orderings by the Laplacian spectral radius, while
Dong and Guo in \cite{DoGu06} used Wiener index for ordering the trees.
The authors in \cite{IlIlSt09} generalized the recent results from \cite{LiPa08} and \cite{WaGu08},
which proved that the caterpillar $C_{n, d}$ is the unique tree with $n$ vertices and diameter $d$,
that minimizes Wiener index $c_{n - 2}$.
X.-D. Zhang et al. in \cite{ZhLvCh09}, proved that $C_{n, d}$ has minimal Laplacian coefficients only for the
cases $d = 3$ and $d = 4$, while here we prove it for all $2 \leqslant d \leqslant n - 1$.

The paper is organized as follows. In Section 2 we revise two graph transformations, such that
all Laplacian coefficients are monotone under these transformations. Also, we derive
the partial ordering of starlike trees based on the majorization inequalities of the pendent path lengths.
In Section 3 we give an alternative proof of the fact that among
$n$ vertex trees with fixed diameter $d$, the caterpillar $C_{n, d}$ has minimal Laplacian coefficient $c_k$,
for every $k = 0, 1, \ldots, n$.
In addition, we construct an infinite family of incomparable pairs of trees based on two Mohar's ordering and
calculate the Laplacian coefficients for the special case $d = n - 3$.
In Section 4, the number of incomparable pairs of trees for $n \leqslant 18$ vertices is presented, and we also
derive a chain of inequalities of length $m \sim \frac{n^2}{4}$, such that
$$
S_n \cong T_0 \preceq_c T_1 \preceq_c T_2 \preceq_c \ldots \preceq_c T_{m - 1} \preceq_c T_m \cong P_n.
$$
In Section 5, we deal with the extremal Laplacian coefficients of trees with given maximum degree and perfect matching.

\section{Transformations and starlike trees}

Mohar in \cite{Mo07} proved that every tree can be transformed into a star by a sequence of $\sigma$-transformations.
Here we present the transformation from \cite{IlIl09}, that is a generalization of $\sigma$-transformation.

\begin{defn}
Let $v$ be a vertex of a tree $T$ of degree $m + 1$. Suppose that
$P_1, P_2, \ldots, P_m$ are pendent paths incident with $v$, with
lengths $n_i \geqslant 1$, $i = 1, 2, \ldots, m$. Let $w$ be the
neighbor of $v$ distinct from the starting vertices of paths $v_1,
v_2, \ldots, v_m$, respectively. We form a tree $T' = \delta (T, v)$
by removing the edges $v v_1, v v_2, \ldots, v v_{m - 1}$ from $T$
and adding $m - 1$ new edges $w v_1, w v_2, \ldots, w v_{m - 1}$
incident with $w$. We say that $T'$ is a $\delta$-transform of $T$.
\end{defn}

This transformation preserves the number of pendent vertices in a
tree $T$ and decreases all Laplacian coefficients.

\begin{thm}
\label{thm-delta} Let $T$ be an arbitrary tree, rooted at the center
vertex. Let vertex $v$ be a vertex furthest from the center of tree $T$ among all
branching vertices with degree at least three. Then, for
$\delta$-transformation tree $T' = \delta (T, v)$ and $0 \leqslant k
\leqslant n$ holds
$$
\label{eqn-delta} c_k (T) \geqslant c_k (T').
$$
\end{thm}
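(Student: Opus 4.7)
My plan is first to invoke equation~\eqref{eq-zhou}, which converts the Laplacian statement into one about matchings: it suffices to show $m_k(S(T)) \geq m_k(S(T'))$ for all $k$, i.e.\ that $Z(S(T),x) - Z(S(T'),x) \geq 0$ coefficient-wise, where $Z(G,x) := \sum_k m_k(G)\,x^k$ is the matching generating polynomial. The graphs $S(T)$ and $S(T')$ share a common induced subgraph $Y := S(T'')$, where $T''$ is the subtree of $T$ obtained by deleting $v$ together with all interior vertices of the pendent paths $P_1,\dots,P_m$; in particular $w \in V(Y)$. The two subdivision graphs differ only in how the subdivided pendent paths (of lengths $\ell_i = 2n_i$) and the ``bridge'' $v-u-w$ (with $u$ the subdivision vertex of the edge $vw$) are attached to $w$.

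I would then expand both $Z(S(T),x)$ and $Z(S(T'),x)$ using two standard identities: the cut-vertex decomposition $Z(G_1 \cup_w G_2) = Z(G_1)\,Z(G_2-w) + Z(G_1-w)\,Z(G_2) - Z(G_1-w)\,Z(G_2-w)$ applied at $w$ with $G_1 = Y$; and the pendent-path identity $Z(G_0 + P_\ell) = Z(G_0)\,F_\ell + x\,Z(G_0-z)\,F_{\ell-1}$ for attaching an $\ell$-vertex pendent path at $z$, where $F_\ell := Z(P_\ell,x)$ satisfies the Fibonacci-type recurrence $F_{\ell+1} = F_\ell + x\,F_{\ell-1}$ (with $F_0 = F_1 = 1$). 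Iterating these at the $m$ pendent paths of $v$ in $S(T)$, and at the $m-1$ pendent paths of $w$ together with the longer appendage $w-u-v-s_m-\cdots$ in $S(T')$, and then collapsing the resulting expressions via the recurrence for $F_\ell$, I anticipate that the difference will simplify to the compact form
\[
 Z(S(T),x) - Z(S(T'),x) \;=\; x\,\Sigma'\cdot\bigl(\,Z(Y)\,F_{\ell_m} \;-\; Z(Y-w)\,F_{\ell_m+1}\,\bigr),
\]
where $\Sigma' = \sum_{j<m} F_{\ell_j-1} \prod_{i\neq j,\,i<m} F_{\ell_i}$ has manifestly non-negative coefficients.

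The whole proof therefore reduces to the single polynomial inequality $Z(Y)\,F_{\ell_m} \geq Z(Y-w)\,F_{\ell_m+1}$ coefficient-wise, and this is where I expect the main obstacle to lie. A short direct check shows the inequality can fail for very small $T''$ (for instance when $T''$ is a single edge with $w$ as a leaf), so the hypothesis that $v$ is the branching vertex \emph{furthest from the center of $T$} must be used essentially: it forces the longest simple path from $v$ in $T$ to run through $w$, which guarantees that $T''$ carries a path of length at least $n_m$ starting from $w$---exactly the amount of ``mass past $w$'' needed for the comparison. I would establish the reduced inequality by structural induction on $|V(T'')|$: the edge-recurrence $Z(G) = Z(G-e) + x\,Z(G-\{u,v\})$ lets me peel off a leaf of $T''$ on the long arm away from $w$, reducing the inequality for $T''$ to one for a strictly smaller subtree. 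The base case---where $T''$ is a path of length at least $n_m$ emanating from $w$---becomes a classical Fibonacci-type identity among the $F_\ell$, verifiable by a short induction on $\ell$.
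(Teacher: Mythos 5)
The paper never proves Theorem~\ref{thm-delta}: it is imported verbatim from the reference [IlIl09] (``Here we present the transformation from [IlIl09]\dots''), so there is no in-paper argument to compare yours against, and your proposal has to stand on its own. On its own terms, the skeleton is sound and the main computation checks out: writing $S(T)$ as $Y=S(T'')$ with a spider attached at $w$ through the two-edge bridge $w$--$u$--$v$, and $S(T')$ as $Y$ with $m$ pendent paths attached directly at $w$, the cut-vertex and pendent-path identities together with $F_{\ell+1}=F_\ell+xF_{\ell-1}$ do collapse the difference to exactly $x\,\Sigma'\bigl(Z(Y)F_{2n_m}-Z(Y-w)F_{2n_m+1}\bigr)$ (I verified this; e.g.\ for $T=T(3,1,1)$, $T'=T(2,2,1)$ it gives $x^4$, matching the Wiener-index gap $32-31$). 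You also correctly locate where the ``furthest branching vertex'' hypothesis must enter, since $F_LF_\ell-F_{L-1}F_{\ell+1}=(-1)^\ell x^{\ell+1}F_{L-\ell-2}$ is genuinely negative when $Y$ is a too-short path.

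Two steps are not yet proofs. First, the closing induction is mis-specified: the recurrence $Z(G)=Z(G-e)+xZ(G-u-v)$ applied at a leaf $z$ with neighbour $y$ produces a second term in which $y$ is also deleted; if $y$ lies on the designated long path $Q$ from $w$, the inductive hypothesis fails for $Y-z-y$ (and you certainly cannot peel leaves ``on the long arm'', since shortening $Q$ below $\ell$ vertices makes the path-case difference equal to $-x^{\ell-1}<0$). You need either a strengthened inductive statement that controls what happens when the surviving piece of $Q$ is short, or a decomposition along $Q$ itself rather than leaf-peeling. Second, the geometric claim that the hypothesis forces $T''$ to contain a path of $\geqslant n_m$ edges from $w$ is asserted, not argued. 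It is true in the main case, but only via a real argument: with $c$ the center, $t=d(c,v)\geqslant 1$, and $w$ the parent of $v$, the center has two branches of depth $\geqslant r-1$, at most one of which contains $v$, so $T''$ contains a path from $w$ of length $\geqslant (t-1)+(r-1)\geqslant r-t\geqslant \max_i n_i$. The degenerate configurations --- $v$ equal to the center of a starlike tree, or the center lying on one of the pendent paths --- must be excluded or treated separately, and this is not a formality: for $T=T(2,2,1)$ with $v$ the (central) branching vertex, every choice of $w$ makes the $\delta$-transform \emph{increase} the coefficients, so the whole burden of the theorem sits on exactly the step you left as an assertion.
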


Mohar in \cite{Mo07} proved that every tree can be transformed into a path by a sequence of $\pi$-transformations.
Here we present the transformation from \cite{IlIlSt09}, that is a generalization of $\pi$-transformation.

\begin{thm}
\label{thm-pi} Let $w$ be a vertex of the nontrivial connected graph
$G$ and for nonnegative integers $p$ and $q$, let $G (p, q)$ denote
the graph obtained from $G$ by attaching pendent paths $P = w v_1
v_2 \ldots v_p$ and $Q = w u_1 u_2 \dots u_q$ of lengths $p$
and~$q$, respectively, at~$w$. If $p \geqslant q \geqslant 1$, then
$$
\label{eq-pi} c_k (G (p, q)) \leqslant c_k (G (p + 1, q - 1)), \qquad k = 0, 1, 2 \ldots, n.
$$
\end{thm}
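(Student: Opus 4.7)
My plan is to reduce the Laplacian comparison to a matching-number comparison in the subdivision graph (the natural route when $G$ is a tree, which is the case needed for the applications in this paper) and then express the difference in closed form as a product of manifestly non-negative generating functions.

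First I will use (\ref{eq-zhou}) to pass from $c_k(G(p,q))$ to $m_k(S(G(p,q)))$. Subdividing each edge of the two pendent paths doubles their (edge-)length, so $S(G(p,q))$ equals $H := S(G)$ with two pendent paths of lengths $r := 2p$ and $s := 2q$ attached at $w$. Writing $F_n(y) = \sum_j \binom{n-j}{j} y^j$ for the matching generating polynomial of the path $P_n$ (which satisfies $F_n = F_{n-1} + y F_{n-2}$, with $F_0 = F_1 = 1$, $F_{-1} = 0$) and $\mu(K; y) = \sum_i m_i(K)\, y^i$ for any graph $K$, I will classify each matching of $H + P_r + P_s$ by whether $w$ is unsaturated, saturated by an edge of $H$, or saturated by the first edge of one of the two pendent paths. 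This case analysis yields the decomposition
$$\sum_k m_k(H + P_r + P_s)\, y^k = \mu(H; y)\, F_r(y) F_s(y) + y\, \mu(H - w; y)\, [F_{r-1}(y) F_s(y) + F_r(y) F_{s-1}(y)].$$

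The key algebraic ingredient I will use is the Fibonacci/Cassini-type identity $F_a F_b - F_{a-1} F_{b+1} = (-y)^a F_{b-a}$, which follows by induction on $a$ from the recurrence. Applying it to both differences $F_{r+2} F_{s-2} - F_r F_s$ and $F_{r+1} F_{s-2} + F_{r+2} F_{s-3} - F_{r-1} F_s - F_r F_{s-1}$, and using that $s = 2q$ is even so that every sign $(-1)^s$ collapses to $+1$, the difference of the two decompositions telescopes into the clean product
$$\sum_k \big[m_k(H + P_{r+2} + P_{s-2}) - m_k(H + P_r + P_s)\big]\, y^k = y^{s-1}\, F_{r-s+1}(y)\, \big[\mu(H; y) - \mu(H-w; y)\big].$$
Both factors have non-negative coefficients --- the first because $F_n$ does, the second because $\mu(H;y) - \mu(H-w;y)$ is the generating function for matchings of $H$ that saturate $w$ --- so the desired coefficient-wise inequality $m_k(H + P_{r+2} + P_{s-2}) \geq m_k(H + P_r + P_s)$ follows.

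The main obstacle I anticipate is precisely the algebraic cancellation above. Individually, the second difference $F_{r+1} F_{s-2} + F_{r+2} F_{s-3} - F_{r-1} F_s - F_r F_{s-1}$ turns out to be \emph{negative} (equal to $-y^{s-2} F_{r-s+1}$ for even $s$), so the theorem does not follow by bounding each piece of the decomposition separately. What will rescue the argument is the coupling $\mu(H;y) = \mu(H - w; y) + [\mu(H; y) - \mu(H - w;y)]$, which lets one of the positive pieces absorb the negative one and produces the product above. For arbitrary (not necessarily tree) $G$ the same strategy extends after replacing (\ref{eq-zhou}) by the Kelmans--Chelnokov spanning-forest expansion $c_k(G) = \sum_F \prod_{T \in F}|V(T)|$; the bookkeeping becomes heavier but the algebraic core --- the Cassini identity for $F_n$ --- is unchanged.
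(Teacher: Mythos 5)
The paper itself gives no proof of Theorem~\ref{thm-pi}: it is imported verbatim from \cite{IlIlSt09}, so there is no in-paper argument to compare yours against. Judged on its own, your argument is correct and complete in the case where $G$ (hence $G(p,q)$) is a tree, which is the only case this paper ever invokes. I checked the pieces: the decomposition of $\mu(H+P_r+P_s;y)$ by the saturation status of $w$ is right (including the degenerate case $s=2$, where $F_{-1}=0$ correctly kills the term for matching $w$ into the emptied path); the identity $F_aF_b-F_{a-1}F_{b+1}=(-y)^aF_{b-a}$ follows from $D(a,b)=-y\,D(a-1,b-1)$; and for even $s$ one indeed gets $F_{r+2}F_{s-2}-F_rF_s=y^{s-1}F_{r-s+1}$ and $F_{r+1}F_{s-2}+F_{r+2}F_{s-3}-F_{r-1}F_s-F_rF_{s-1}=-y^{s-2}F_{r-s+1}$, so the difference collapses to $y^{s-1}F_{r-s+1}(y)\bigl[\mu(H;y)-\mu(H-w;y)\bigr]$, which is coefficient-wise non-negative since $r-s+1\geqslant 1$ and the bracket counts matchings of $H$ saturating $w$. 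Your observation that neither bracket is separately of the right sign, and that the coupling through $\mu(H)-\mu(H-w)$ is what saves the argument, is exactly the crux. This is a genuinely different route from the usual one for this lemma (an injection between spanning forests, or between matchings of the subdivision graphs): you get a closed-form, manifestly non-negative expression for the whole difference, which is stronger and arguably cleaner.

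The gap is the generality. The theorem is stated for an arbitrary nontrivial connected graph $G$, and your reduction rests on (\ref{eq-zhou}), which holds only for acyclic graphs. Your closing sentence delegates the general case to the Kelmans--Chelnokov spanning-forest expansion with the claim that ``the algebraic core is unchanged,'' but that is not evident: the weight of a spanning forest is the product of the orders of its components, not a matching count, so the pendent paths do not obviously contribute the polynomials $F_n$ or satisfy a two-term recurrence of the same shape, and no analogue of the three-case decomposition at $w$ is written down. As it stands this is an assertion, not a proof, and the non-tree case (needed, e.g., if one wants to reuse the lemma for unicyclic graphs as in \cite{StIl08}) remains open in your write-up. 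Either restrict the statement you prove to trees --- which covers every application in this paper --- or actually carry out the forest-expansion bookkeeping.
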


We will apply these transformations to starlike trees. In \cite{FeYu06}, \cite{OmTa07} and \cite{GhRaTa08},
the authors proved that starlike trees are determined by their Laplacian spectrum, which means
that no two trees have equal all Laplacian coefficients.

The starlike tree $T (n_1, n_2, \ldots, n_k)$ is a
tree composed of the root $v$, and the paths $P_1, P_2,
\ldots, P_k$ of lengths $n_1, n_2, \ldots, n_k$
attached at~$v$. The number of vertices of the tree $T (n_1, n_2,
\ldots, n_k)$ equals $n = n_1 + n_2 + \ldots + n_k +
1$. The starlike tree $BS_{n, k}$ is {\em balanced} if all paths have
almost equal lengths, i.e., $|n_i - n_j| \leqslant 1$ for every $1
\leqslant i < j \leqslant k$.

Let $x = (x_1, x_2, \ldots, x_n)$ and $y = (y_1, y_2, \ldots, y_n)$
be two integer arrays of length~$n$. We say that $x$ majorize $y$ and
write $x \prec y$ if elements of these arrays satisfy following
conditions:
\begin{enumerate}
\renewcommand{\labelenumi}{(\roman{enumi})}

\item $x_1 \geqslant x_2 \geqslant \ldots \geqslant x_n$ and $y_1 \geqslant y_2 \geqslant \ldots \geqslant
y_n$,
\item $x_1 + x_2 + \ldots + x_k \geqslant y_1 + y_2 + \ldots + y_k$,
for every $1 \leqslant k < n$,
\item $x_1 + x_2 + \ldots + x_n = y_1 + y_2 + \ldots + y_n$.
\end{enumerate}

\begin{thm}
Let $p$ and $q$ be the arrays of length $k \geqslant 2$, such that $p \prec q$. Then
\begin{equation}
\label{eq:starlike}
T (p) \preceq_c T (q).
\end{equation}
\end{thm}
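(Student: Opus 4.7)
The plan is to exploit the classical fact that the majorization $p \prec q$ on integer vectors is generated by unit ``Robin Hood'' transfers, and then to view each such transfer as a single application of Theorem~\ref{thm-pi} that weakly decreases every Laplacian coefficient. Concretely, I start by recalling that whenever $p \prec q$ and $p \neq q$, one can find indices $i < j$ with $q_i \geq q_j + 2$ such that the vector $q'$ defined by $q'_i = q_i - 1$, $q'_j = q_j + 1$ (and unchanged elsewhere, re-sorted if necessary) still satisfies $p \preceq q' \prec q$. Iterating produces a finite chain $q = q^{(0)} \succ q^{(1)} \succ \cdots \succ q^{(m)} = p$ in which consecutive vectors differ by exactly one unit transfer; and since each transfer only decreases a coordinate whose value is $\geq b + 2 \geq 3$ and increases one whose value is $\geq 1$, every intermediate vector has all coordinates $\geq 1$ and hence indexes a legitimate starlike tree.

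The heart of the argument is then a single instance of Theorem~\ref{thm-pi} applied to each link of the chain. For a fixed $s$, set $a = q^{(s)}_i$ and $b = q^{(s)}_j$, so $a \geq b + 2$, and let $G$ denote the graph obtained from $T(q^{(s)})$ by deleting the two pendent paths of lengths $a$ and $b$ attached at the center $v$. In the notation of Theorem~\ref{thm-pi} with attachment vertex $w = v$, we have $T(q^{(s)}) = G(a,b)$ and $T(q^{(s+1)}) = G(a-1, b+1)$, and the hypothesis $a - 1 \geq b + 1 \geq 1$ is met, so Theorem~\ref{thm-pi} applied with parameters $(a-1, b+1)$ yields $c_k(T(q^{(s+1)})) \leq c_k(T(q^{(s)}))$ for every $k$. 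Chaining these inequalities across the $m$ steps gives $c_k(T(p)) \leq c_k(T(q))$ for all $k$, which is precisely $T(p) \preceq_c T(q)$.

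The Laplacian-coefficient monotonicity work is already furnished by Theorem~\ref{thm-pi}, so the substantive content is combinatorial: confirming that the unit-transfer decomposition of majorization can be chosen to stay inside the set of positive-integer vectors, so that every intermediate starlike tree is well defined. I expect this to be the main bookkeeping point rather than a genuine obstacle, since the gap condition $a \geq b + 2$ that defines a valid transfer is exactly what keeps the swap $G(a,b) \to G(a-1,b+1)$ within the range $p \geq q \geq 1$ required by Theorem~\ref{thm-pi}. The only edge case worth isolating is $k = 2$, where $T(p)$ and $T(q)$ are paths on the same number of vertices, so $T(p) \cong T(q)$ and the conclusion is immediate; for $k \geq 3$ the residual graph $G$ at each step contains $v$ together with at least one other pendent path and is therefore a nontrivial connected graph, making Theorem~\ref{thm-pi} directly applicable.
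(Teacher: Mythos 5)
Your proof is correct and follows essentially the same route as the paper: both realize the majorization relating $p$ and $q$ as a finite chain of elementary transfers $q_i \mapsto q_i - 1$, $q_j \mapsto q_j + 1$ and invoke Theorem~\ref{thm-pi} once per transfer to get monotonicity of every $c_k$. You merely replace the paper's induction on $k$ (with its split at an index where the partial sums coincide) by the classical unit-transfer decomposition, and you are in fact more careful than the paper about the two points it glosses over — that every intermediate vector keeps all entries $\geqslant 1$ so the intermediate starlike trees exist, and that for $k = 2$ the residual graph $G$ would be trivial (there both trees are paths on $n$ vertices, so the claim is immediate).
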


\begin{proof}
Let $n$ denotes the number of vertices in trees $T (p)$ and $T (q)$,
$n = p_1 + p_2 + \ldots + p_k = q_1 + q_2 + \ldots q_k$.
We will proceed by mathematical induction on the size of the array $k$.
For $k = 2$, we can directly apply transformation from Theorem \ref{thm-pi} on tree $T (q)$ several times,
in order to get $T (p)$.

Assume that the inequality (\ref{eq:starlike}) holds for all lengths less than or equal to $k$.
If there exists index $1 \leqslant m < k$ such that $p_1 + p_2 + \ldots + p_m = q_1 + q_2 + \ldots
+ q_m$, we can apply inductive hypothesis on two parts $T (q_1, q_2, \ldots, q_m)$ and $T (q_{m + 1}, q_{m + 2}, \ldots, q_k)$
and get $T (p_1, p_2, \ldots, p_m)$ and $T (p_{m + 1}, p_{m + 2}, \ldots, p_k)$.

Otherwise, we have strict inequalities $p_1 + p_2 + \ldots + p_m < q_1 + q_2 + \ldots
+ q_m$ for all indices $1 \leqslant m < k$. We can transform tree $T (q_1, q_2, \ldots, q_k)$ into
$$
T (q_1, q_2, \ldots, q_{s - 1}, q_{s} - 1, q_{s + 1}, \ldots, q_{r - 1}, q_{r} + 1, q_{r + 1}, \ldots, q_k),
$$
where $s$ is the largest index such that $q_1 = q_2 = \ldots = q_s$ and $r$ is the smallest index
such that $q_r = q_{r + 1} = \ldots = q_k$. The condition $p \prec q$ is preserved, and we can continue
until the array $q$ transforms into $p$, while at every step we decrease the Laplacian coefficients.
\end{proof}

A canonical example of majorization is

\begin{cor}
Let $T \not \cong BT_{n, k}$ be an arbitrary starlike tree with $k$ pendent paths on $n$ vertices.
Then
$$
c_k (BT_{n, k}) \leqslant c_k (T), \qquad k = 0, 1, \ldots, n.
$$
\end{cor}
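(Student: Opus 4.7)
The plan is to deduce the corollary directly from the preceding theorem on majorization of pendent path lengths, so that nothing new remains beyond a short combinatorial check. First I would write the arbitrary starlike tree as $T \cong T(q_1, q_2, \ldots, q_k)$ with $q_1 \geqslant q_2 \geqslant \ldots \geqslant q_k \geqslant 1$ and $q_1 + \ldots + q_k = n - 1$, and the balanced starlike tree as $BT_{n,k} \cong T(p_1, p_2, \ldots, p_k)$ with $p_1 \geqslant \ldots \geqslant p_k \geqslant 1$, $|p_i - p_j| \leqslant 1$, and the same total. Once I verify that $p \prec q$ in the sense of the majorization definition above, inequality~(\ref{eq:starlike}) yields $T(p) \preceq_c T(q)$, i.e., $c_j(BT_{n,k}) \leqslant c_j(T)$ for every index $j = 0, 1, \ldots, n$ (I use $j$ for the coefficient index to avoid clashing with $k$, the number of paths).

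The key step is the observation that $p$ is the minimum in the majorization order among all nonincreasing compositions of $n-1$ into $k$ positive parts. Writing $n - 1 = kq + r$ with $0 \leqslant r < k$, the balanced vector has $r$ entries equal to $q+1$ followed by $k - r$ entries equal to $q$, so its partial sums satisfy $p_1 + \ldots + p_m = \lceil m(n-1)/k \rceil$. On the other hand, for any nonincreasing $q_1 \geqslant \ldots \geqslant q_k$ summing to $n - 1$, the first $m$ entries have average at least the overall average $(n-1)/k$, hence $q_1 + \ldots + q_m \geqslant m(n-1)/k$, and being an integer this forces $q_1 + \ldots + q_m \geqslant \lceil m(n-1)/k \rceil = p_1 + \ldots + p_m$. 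Clauses (i) and (iii) of the majorization definition are immediate, so $p \prec q$.

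I expect the main obstacle to be merely this bookkeeping with partial sums; there is no deeper step left. All monotonicity content is carried by Theorem~\ref{thm-pi}, and its propagation from two pendent paths to the full majorization statement is precisely what the preceding theorem accomplishes. Combining $p \prec q$ with that theorem closes the argument and delivers the advertised chain of inequalities for $BT_{n,k}$.
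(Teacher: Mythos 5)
Your overall strategy is exactly what the paper intends: the corollary is stated immediately after the majorization theorem as ``a canonical example,'' with no separate proof, so reducing it to the claim that the balanced length vector is the minimum of the majorization order on compositions of $n-1$ into $k$ positive parts, and then invoking inequality~(\ref{eq:starlike}), is the right route. You also read $p \prec q$ in the standard sense (the partial sums of $q$ dominate those of $p$), which is what the theorem's proof and both corollaries actually use, even though condition (ii) of the paper's displayed definition is written with the opposite inequality; that reading is the correct one to adopt.

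However, the step you yourself single out as the key one contains a genuine error. The identity $p_1 + \cdots + p_m = \lceil m(n-1)/k\rceil$ for the balanced vector is false in general: take $n-1 = 8$ and $k = 5$, so the balanced vector is $(2,2,2,1,1)$ and $p_1+p_2+p_3 = 6$, while $\lceil 3\cdot 8/5\rceil = 5$. Consequently the averaging bound $q_1+\cdots+q_m \geqslant \lceil m(n-1)/k\rceil$ does not deliver $q_1+\cdots+q_m \geqslant p_1+\cdots+p_m$, and the majorization is not established. The claim is still true, but its proof must use the monotonicity of $q$, not only its average. Write $n-1 = kq^{*}+r$ with $0 \leqslant r < k$, so that $p_1+\cdots+p_m = m(q^{*}+1)$ for $m \leqslant r$ and $mq^{*}+r$ for $m \geqslant r$. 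If $m \geqslant r$ and $q_1+\cdots+q_m \leqslant mq^{*}+r-1$, then the tail $q_{m+1}+\cdots+q_k \geqslant (k-m)q^{*}+1$ forces $q_{m+1} \geqslant q^{*}+1$, hence every head entry is at least $q^{*}+1$ and the head sum is at least $m(q^{*}+1) \geqslant mq^{*}+r$, a contradiction. If $m < r$ and $q_1+\cdots+q_m \leqslant m(q^{*}+1)-1$, then $q_m \leqslant q^{*}$, hence the tail is at most $(k-m)q^{*}$ and the head is at least $(n-1)-(k-m)q^{*} = mq^{*}+r \geqslant m(q^{*}+1)+1$, again a contradiction. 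With this repair your argument is complete and coincides with the paper's intended one.
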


%

The broom $B_{n, \Delta}$ is a tree consisting of a star $S_{\Delta
+ 1}$ and a path of length $n - \Delta - 1$ attached to an arbitrary
pendent vertex of the star (see Figure~1).
It is proven in \cite{LiGu07} that among trees with perfect matching and maximum degree equal to
$\Delta$, the broom $B_{n, \Delta}$ uniquely minimizes the largest eigenvalue of adjacency matrix.
Also it is shown that among trees with bounded degree $\Delta$, the broom has minimal
Wiener index and Laplacian-like energy \cite{St09}. In \cite{YaYe05} and \cite{YuLv06} the broom
has minimal energy among trees with fixed diameter or fixed number of pendent vertices.

\begin{figure}[ht]
  \center
  \includegraphics [width = 7.5cm]{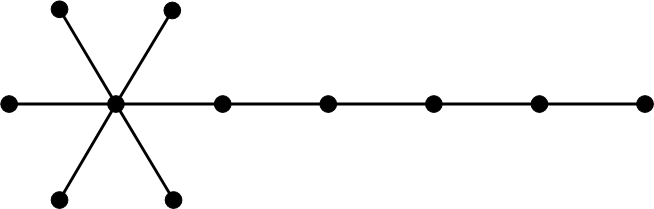}
  \caption { \textit{ The broom $B_{11, 6}$. } }
\end{figure}

For the maximum case, we have following

\begin{cor}
\label{cor-broom} Let $T \not \cong B_{n, \Delta}$ be an arbitrary
tree on $n$ vertices with the maximum vertex degree~$\Delta$. Then
$$
c_k (B_{n, \Delta}) \geqslant c_k (T), \qquad k = 0, 1, \ldots, n.
$$
\end{cor}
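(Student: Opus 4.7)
The plan is to transform $T$ into the broom $B_{n,\Delta}$ by a sequence of $\pi$-transformations (Theorem~\ref{thm-pi}), each of which does not decrease any Laplacian coefficient and keeps the maximum degree equal to $\Delta$. Fix a vertex $v \in V(T)$ with $\deg(v) = \Delta$; the argument would proceed in two stages.

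In Stage~1 I would first reduce $T$ to a starlike tree with center $v$. If some vertex $u \neq v$ is branching, I always select such $u$ at maximum distance from $v$ in $T$; this choice guarantees that every subtree hanging from $u$ in the direction away from $v$ contains no branching, hence is a pendent path. Letting those pendent paths have lengths $n_1 \geq \cdots \geq n_m$ with $m = \deg(u) - 1 \geq 2$, I would iterate the $\pi$-step $(p,q) \mapsto (p+1, q-1)$ at $u$ on the longest and shortest of them exactly $n_m$ times, which fuses the two paths into a single pendent path of length $n_1 + n_m$ and drops $\deg(u)$ by one; by Theorem~\ref{thm-pi} no $c_k$ decreases and $\deg(v)$ is untouched. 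After $m-1$ such fusions $u$ is no longer branching; iterating on the new farthest branching vertex, and so on, yields in finitely many steps a starlike tree $T^* = T(\ell_1, \ldots, \ell_\Delta)$ with $\ell_i \geq 1$ and $\sum_i \ell_i = n - 1$, satisfying $c_k(T) \leq c_k(T^*)$ for every $k$.

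In Stage~2 I would transform $T^*$ into $B_{n, \Delta}$. With $\ell_1 \geq \cdots \geq \ell_\Delta$, as long as some $\ell_i$ with $i \geq 2$ exceeds $1$, I would apply Theorem~\ref{thm-pi} at $v$ to the arms of lengths $\ell_1$ and $\ell_i$, obtaining arms of lengths $\ell_1 + 1$ and $\ell_i - 1 \geq 1$. The shorter arm survives, so $\deg(v) = \Delta$ is preserved and no $c_k$ decreases. Each such step strictly decreases $\sum_{i \geq 2} \ell_i$, so after finitely many steps $\ell_2 = \cdots = \ell_\Delta = 1$ and $\ell_1 = n - \Delta$, meaning the tree equals $B_{n, \Delta}$. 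Chaining the two stages yields $c_k(T) \leq c_k(T^*) \leq c_k(B_{n, \Delta})$.

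The main obstacle is Stage~1. The delicate point is the choice of the \emph{farthest} branching vertex, which is what forces the subtrees hanging off $u$ to be genuine pendent paths, the hypothesis required to invoke Theorem~\ref{thm-pi}. One must also verify that the iterated merging never increases the maximum degree or introduces a new branching vertex: this is immediate since a single $\pi$-step only removes one edge at $u$ (or at the interior of the shorter path) and adds one edge at the far end of the longer path, leaving all other degrees $\leq 2$. Stage~2 is then a routine iteration of Theorem~\ref{thm-pi} at $v$ and presents no additional difficulties.
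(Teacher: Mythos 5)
Your proof is correct and follows essentially the same route as the paper: the paper states this as a corollary of the majorization theorem for starlike trees, which is itself proved by iterating Theorem~\ref{thm-pi}, exactly as your Stage~2 does directly. Your Stage~1 usefully makes explicit the reduction of an arbitrary tree with maximum degree $\Delta$ to a starlike tree centered at a vertex of degree $\Delta$ (via $\pi$-transformations at the farthest branching vertex), a step the paper leaves implicit when passing from ``starlike trees with $\Delta$ arms'' to ``all trees with maximum degree $\Delta$.''
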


We can refine the above relation using Theorem \ref{thm-pi} applied on the vertex
of degree greater than $2$
$$
c_k (S_{n}) = c_k (B_{n,n-1}) \leqslant c_k (B_{n,n-2}) \leqslant \ldots \leqslant c_k (B_{n,3}) \leqslant c_k (B_{n,2}) = c_k (P_{n}),
$$
for every $k = 0, 1, \ldots n$. It follows that $B_{n, 3}$ has the second largest Laplacian
coefficients among trees on $n$ vertices.

\section{Laplacian coefficients of trees with given diameter}

Let $C(a_1, a_2, \ldots, a_{d-1})$ be a caterpillar obtained
from a path~$P_d$ with vertices $\{v_{0},v_{1},\dots,v_d\}$
by attaching $a_i$ pendent edges to vertex~$v_{i}$, $i=1,2\dots,d-1$.
Clearly, $C(a_1, a_2, \ldots, a_{d-1})$ has diameter~$d$ and
$n = d+1+\sum_{i = 1}^{d-1} a_i$.
For simplicity, denote $C_{n, d} = C (0, \ldots, 0, a_{\lfloor d / 2 \rfloor}, 0, \ldots, 0)$.
In \cite{SiMaBe08} it is shown that caterpillar $C_{n, d}$ has
minimal spectral radius (the greatest eigenvalue of adjacency
matrix) among graphs with fixed diameter.

\begin{figure}[ht]
  \center
  \includegraphics [width = 11cm]{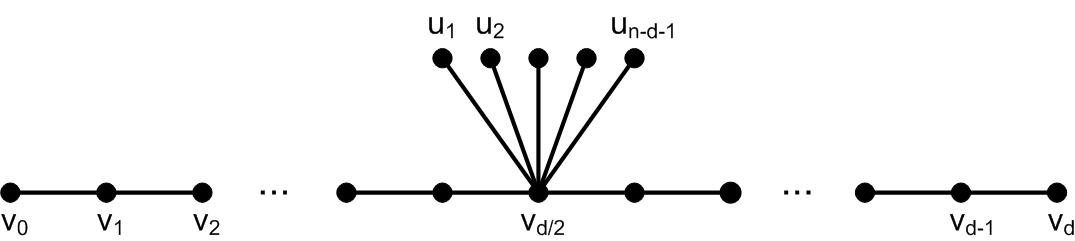}
  \caption {\textit{Caterpillar $C_{n, d}$.}}
\end{figure}

\begin{thm}
\label{thm:diameter} Among trees on $n$ vertices
and diameter $d$, caterpillar $C_{n, d}$ has minimal Laplacian
coefficient $c_k$, for every $k = 0, 1, \ldots, n$.
\end{thm}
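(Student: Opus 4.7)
My plan is to exhibit, for any tree $T$ of diameter $d$ with $T \not\cong C_{n,d}$, a finite sequence of $\delta$-transformations (Theorem~\ref{thm-delta}) and $\pi$-transformations (Theorem~\ref{thm-pi}) that leads from $T$ to $C_{n,d}$, preserves the diameter throughout, and weakly decreases every Laplacian coefficient. Fix a diametral path $P = v_0 v_1 \cdots v_d$ in $T$; since every step will leave the edges of $P$ intact, $P$ persists in every intermediate tree, so the diameter never drops below $d$, while the distance analysis of each step will show it never exceeds $d$ either.

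I would carry out three phases in order. First, while some off-$P$ subtree contains a branching vertex, I apply Theorem~\ref{thm-delta} at an off-$P$ branching vertex $v$ of maximum distance from the center; this choice matches the hypothesis of the theorem because a deep off-path branching vertex is strictly farther from the center than any branching vertex lying on $P$. Second, once every off-$P$ structure is a simple pendent path, I split each such path of length $\ell \geq 2$ attached at some $v_i$ into $\ell$ pendent edges at $v_i$ by iterating the splitting direction of Theorem~\ref{thm-pi} (the transition $G(p+1, q-1) \mapsto G(p,q)$ with $q=1$ strictly decreases every $c_k$); a routine distance check confirms the diameter stays equal to $d$. After these two phases $T$ has become a caterpillar $C(a_1, \ldots, a_{d-1})$. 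Third, while some $a_j \neq 0$ with $j \neq \lfloor d/2 \rfloor$, I pick such $j$ maximizing $|j - \lfloor d/2 \rfloor|$ and WLOG assume $j < \lfloor d/2 \rfloor$; the choice forces $a_1 = \cdots = a_{j-1} = 0$, so $v_j v_{j-1} \cdots v_0$ is a genuine pendent path of length $j$ at $v_j$ and $v_j$ is the farthest branching vertex from the center. Applying Theorem~\ref{thm-delta} at $v_j$ with this long pendent path kept in place slides the $a_j$ pendent edges over to $v_{j+1}$; the diameter remains $d$ (the new pendent at $v_{j+1}$ lies at distance $j+2 \leq d$ from $v_0$ and $d-j \leq d$ from $v_d$), and the weight $\sum_i a_i\, |i - \lfloor d/2 \rfloor|$ strictly drops, forcing termination at $C_{n,d}$.

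The main technical obstacle is verifying the precondition of Theorem~\ref{thm-delta} at every step, namely that the chosen $v$ is indeed the farthest branching vertex from the center of the \emph{current} tree; the center can migrate as the tree evolves, and for odd $d$ the center is a pair of adjacent path vertices, introducing an asymmetry in phase three under which pendents arriving from the two sides may pile up at both $v_{(d-1)/2}$ and $v_{(d+1)/2}$ and require one final $\delta$-step to consolidate at $v_{\lfloor d/2 \rfloor}$. Once the correct selection is justified at each stage via an eccentricity comparison, chaining the inequalities $c_k(T) \geq c_k(T_1) \geq \cdots \geq c_k(C_{n,d})$ yields the theorem.
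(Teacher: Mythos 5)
Your overall strategy is the same as the paper's own proof: fix a diametral path $P$, use the transformations of Theorems~\ref{thm-delta} and~\ref{thm-pi} to collapse the subtrees hanging off $P$ until a caterpillar $C(a_1,\ldots,a_{d-1})$ is reached, and then slide the pendent edges to the central vertex $v_{\lfloor d/2\rfloor}$ by further $\delta$-steps; your Phases 2 and 3, the diameter bookkeeping, and the potential function $\sum_i a_i\,|i-\lfloor d/2\rfloor|$ are in fact more explicit than the paper's sketch. The one genuine defect is the justification of Phase 1. The claim that ``a deep off-path branching vertex is strictly farther from the center than any branching vertex lying on $P$'' is false: for $d=10$ attach one pendent edge at $v_1$ (a branching vertex at distance $4$ from the center $v_5$) and attach at $v_5$ a four-vertex subtree whose only branching vertex $u$ is adjacent to $v_5$ (distance $1$ from the center). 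Theorem~\ref{thm-delta} is stated only for the branching vertex that is furthest from the center among \emph{all} branching vertices, so Phase 1 as written invokes the theorem at a vertex where its hypothesis may fail; you cannot in general postpone every on-path $\delta$-step until after all off-path branching has been removed.

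The repair needs no new idea, only a reordering: at each step apply Theorem~\ref{thm-delta} at the genuinely furthest branching vertex $v$ from the center. If $v$ is off $P$ this is one of your Phase-1 steps. If $v=v_j$ lies on $P$, then no branching vertex is farther from the center, so every subtree of $v_j$ pointing away from the center is a pendent path, one of which is $v_jv_{j-1}\cdots v_0$ (or $v_jv_{j+1}\cdots v_d$); keeping that path and moving the remaining ones to the neighbor toward the center is exactly your Phase-3 step, and your distance check shows the diameter is preserved. Thus the phases interleave, and termination follows from a lexicographic potential (number of off-path branching vertices, total off-path size, then $\sum_i a_i|i-\lfloor d/2\rfloor|$). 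Two smaller points: Theorem~\ref{thm-pi} gives only weak inequalities (and $c_0$, $c_1$ never change), so the phrase ``strictly decreases every $c_k$'' should be dropped --- the theorem only asserts minimality, so weak inequalities suffice; and the selection rule in Phase 3 should maximize the distance to the center rather than $|j-\lfloor d/2\rfloor|$, which also disposes of the odd-$d$ asymmetry you flag.
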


In \cite{IlIlSt09}, the authors also considered the connected $n$-vertex graphs with fixed radius,
and proved that $C_{n, 2r - 1}$ is the extremal graph with minimal Laplacian coefficients.

Here, we give an alternative proof of Theorem \ref{thm:diameter}. Let $P = v_0 v_1 v_2 \ldots v_d$ be a path in tree $T$ of maximal
length. Every vertex $v_i$ on the path $P$ is a root of a tree $T_i$
with $a_i + 1$ vertices, that does not contain other vertices of
$P$. We apply $\sigma$-transformation (or combination of transformations
from Theorem \ref{thm-delta} and Theorem \ref{thm-pi}) on trees $T_1, T_2, \ldots,
T_{d - 1}$ to decrease coefficients $c_k$, as long as we do not get
a caterpillar $C (a_0, a_1, a_2, \ldots, a_d)$.

Let $1 \leqslant r \leqslant d - 1$ be the smallest index such that $a_r > 0$, and analogously let
$1 \leqslant s \leqslant d - 1$ be the largest index such that $a_s > 0$. We can perform $\delta$ transformation
to vertex $v_r$ or vertex $v_s$ and get a caterpillar with smaller Laplacian coefficients by moving pendent
vertices to the central vertex $v_{\lfloor d/2 \rfloor}$ of a path. After applying this algorithm,
we finally get the extremal tree~$C_{n, d}$.

If $d < n - 1$, we
can apply the transformation from Theorem~\ref{thm-pi} at the
central vertex of degree greater than $2$ and obtain $C_{n, d + 1}$.
Therefore, we have
$$
c_k (S_n) = c_k( C_{n, 2}) \leqslant c_k (C_{n, 3}) \leqslant \dots \leqslant c_k (C_{n, n - 2}) \leqslant c_k (C_{n, n - 1}) = c_k (P_n),
$$
for every $k = 0, 1, \ldots n$.
It follows that $C_{n, 3}$ has the second smallest Laplacian
coefficients among trees on $n$ vertices.

Naturally, one wants to describe $n$-vertex trees with fixed diameter
with maximal Laplacian coefficients. We have checked all trees up to $18$ vertices and
for every triple $(n, d, k)$ we found extremal trees with $n$ vertices and fixed
diameter $d$ that maximize coefficient $c_k$.
The outcome is interesting -- the extremal trees are not isomorphic (see Figure~3).

\begin{figure}[ht]
  \label{fig-dia}
  \center
  \includegraphics [width = 4cm]{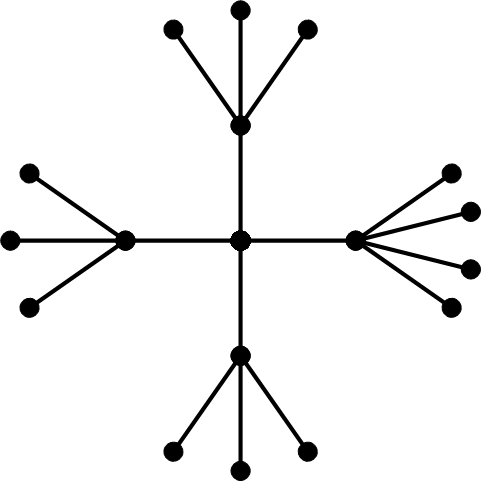}
  \hspace{2cm}
  \includegraphics [width = 4cm]{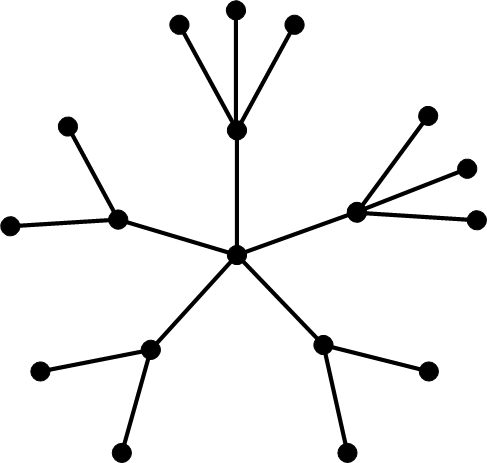}
  \caption { \textit{ Graphs with $n = 18$ and $d = 4$ that maximize $c_{16}$ and $c_{15}$. } }
\end{figure}

For $d = n - 2$, the maximum Laplacian coefficients are achieved
for $B_{n, 3}$. For $d = n - 3$, we have three potential extremal trees depicted on Figure~4 (based on
transformations from Theorem~\ref{thm-pi} and Theorem \ref{thm-delta}).

\begin{figure}[ht]
  \center
  \includegraphics [width = 11cm]{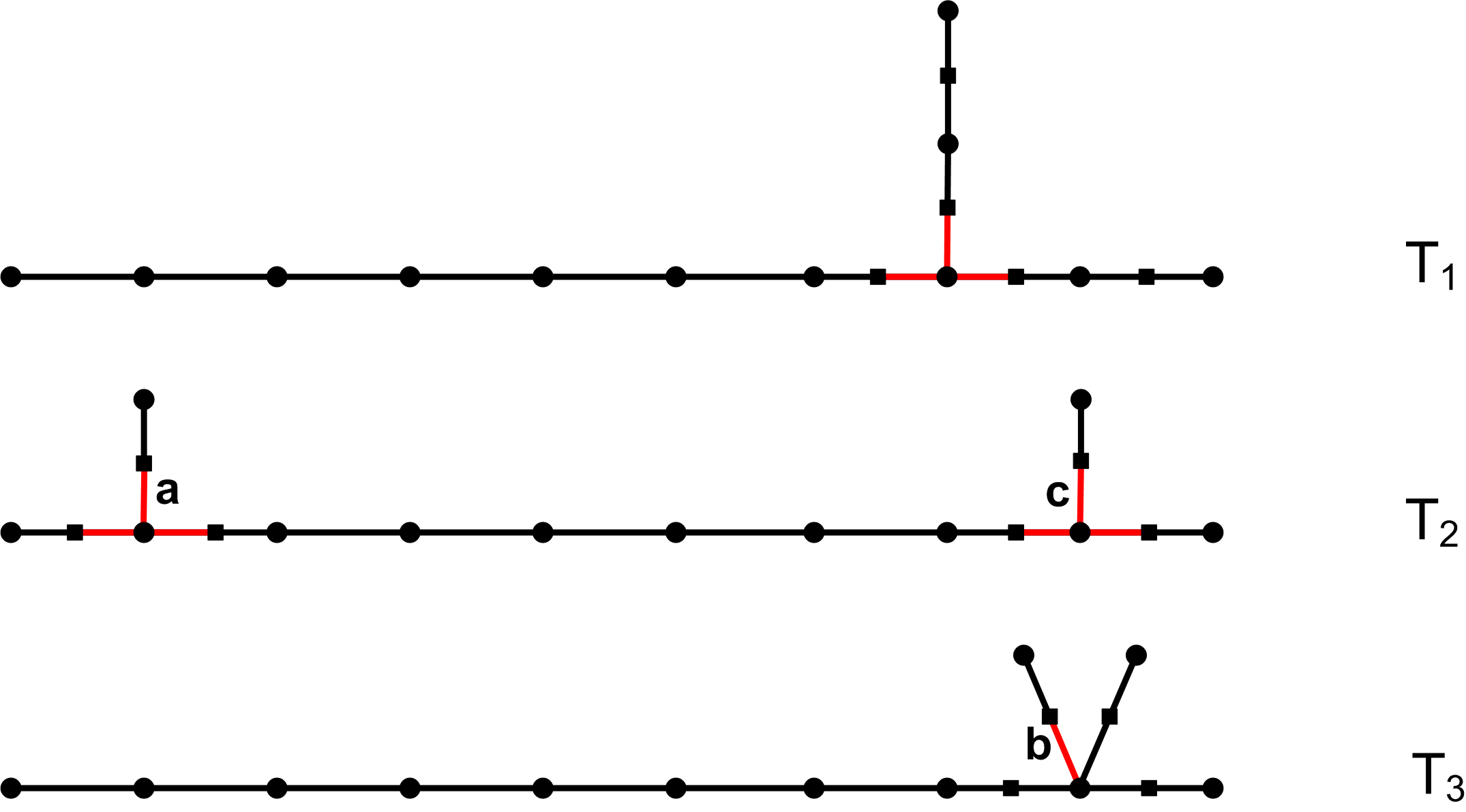}
  \caption {\textit{Extremal trees for $d = n - 3$ (squares represent subdivision vertices).}}
\end{figure}

It is easy to prove that $T_3 \preceq_c T_2$. Namely, consider two marked edges of the subdivision trees --
$a \in E (S (T_2))$ and $b \in E (S (T_3))$. Let $M$ be an arbitrary $k$-matching of $S (T_3)$. We will
construct a corresponding $k$-matching $M'$ of $S (T_2)$ and prove that $c_k (T_3) \leqslant c_k (T_2)$.
If $M$ that does not contain $b$, then $M' = M$ is also $k$-matching of the tree $S (T_2) \setminus \{a\}$.
If $M$ contains the edge $b$, then we set the edge $a$ in the corresponding matching $M'$ of $S (T_2)$.
After removing $a$ and $b$ with their neighboring edges from $S (T_2)$ and $S (T_3)$ respectively,
the decomposed graph of $S (T_3)$ is a subgraph of the decomposed graph of $S (T_2)$. Notice that
the red edge $c \in E (S (T_2))$ does not belong to any corresponding matching $M'$, 
and it follows that in this case we have an injection from the set of $k$-matchings of $S (T_3)$ that contain
the edge $b$ to the set of $k$-matchings of $S (T_2)$ that contains the edge $a$. 
Finally, we have
$c_k (T_3) < c_k (T_2)$ for $n > 7$ and $2 \leqslant k \leqslant n - 2$.

The Laplacian coefficient $c_{2} (T)$ is equal to (see \cite{FeYu06}):
$$
c_{2} (T) = 2n^2 - 5n + 3 - \frac{1}{2} \sum_{i = 1}^n d_i^2 = 2n^2 - 5n + 3 - \frac{1}{2} Z (T),
$$
where $Z (T)$ is the first Zagreb index \cite{GuDa04}. Clearly, we have
$$
Z (T_2) - Z (T_1) = (3^2 + 1^2) - (2^2 + 2^2) = 2.
$$

which means that $c_{2} (T_1) > c_{2} (T_2)$ for $n > 7$.
On the other hand, for the Wiener index $W (T)$ we have
\begin{eqnarray*}
W (T_2) - W (T_1) &=& \left ( 2 (1 + 2 + \ldots + (n - 4) + (n - 3) + 2) + n - 3 \right ) \\
&-& \left ( 2 (1 + 2 + 3 + \ldots + (n - 5) + (n - 4) + 2 + 3) + n - 1 \right) \\
&=& 2n-14,
\end{eqnarray*}
which gives $c_{n - 2} (T_2) > c_{n - 2} (T_1)$ for $n > 7$.
For $n > 7$, the pairs $(T_1, T_2)$ represent an infinite family of examples for Problem~1.

We can calculate the Laplacian coefficients of trees $T_1$ and $T_2$, by considering several cases
involving red edges on Figure 4. In \cite{IlIlSt09}, the authors proved that
for $0 \leqslant k \leqslant \lceil \frac{n}{2} \rceil$, the number
of matchings with $k$ edges for path $P_n$ is $m_k (P_n) = m (n, k) = \binom{n - k}{k}$.
After taking some independent red edges in $k$-matching, the decomposed graphs are the union of one long path and
some number of paths with lengths $2$, $3$ or $4$. Using MATHEMATICA software \cite{Wo08}, we get
\begin{eqnarray*}
c_k (T_1) &=& 6 \binom{2 n -9-k}{k-2}+8 \binom{2n-9-k}{k-1}+\binom{2n-9-k}{k}\\
&+&11\binom{2n-8-k}{k-3} + 21\binom{2n-8-k}{k-2}+6 \binom{2n-7-k}{k-4}\\
&+&20 \binom{2n-7-k}{k-3}+\binom{2n-6-k}{k-5}+5 \binom{2n-6-k}{k-4},
\end{eqnarray*}
and
\begin{eqnarray*}
c_k (T_2) &=& \binom{2n -11 - k}{k-2} + 2 \binom{2n -11 - k}{k-1} + \binom{2n -11 - k }{k}\\
&+& 4 \binom{2n -10 - k}{k-3} + 12 \binom{2n -10 - k }{k-2} + 8 \binom{2n -10 - k }{k-1} \\
&+& 6 \binom{2n -9 - k }{k-4} + 24 \binom{2n -9 - k }{k-3} + 22 \binom{2n -9 - k }{k-2}\\
&+& 4 \binom{2n -8 - k }{k-5} + 20 \binom{2n -8 - k }{k-4} + 24 \binom{2n -8 - k }{k-3}\\
&+&\binom{2n -7 - k}{k-6} + 6 \binom{2n -7 - k}{k-5} + 9 \binom{2n -7 - k}{k-4}.
\end{eqnarray*}

After some manipulations, the difference $c_k (T_2) - c_k (T_1)$ is equal to
$$
2 \cdot \frac{(2n-9-k)!}{(k-2)!(2n-2k-3)!} \cdot P (n, k),
$$
and the sign of $c_k (T_2) - c_k (T_1)$ depends only on the following expression
\begin{eqnarray*}
P (n, k) &=& -408-788 k-120 k^2-13 k^3+3 k^4+844 n+639 k n+81 k^2 n\\
&-&4 k^3 n-466 n^2-186 k n^2-6 k^2 n^2+104 n^3+16 k n^3-8 n^4.
\end{eqnarray*}

For large $n$, we can substitute $x = \frac{k}{n}$ and get the fourth degree polynomial
$$
P (x) = 3x^4 - 4 x^3 -6 x^2 +16x -8.
$$

This polynomial has only one positive real root $x_0 \approx 0.771748$, and therefore
for the Laplacian coefficient $c_k$, we have: $c_k (T_1) > c_k (T_2)$ for $k < n x_0$, and
$c_k (T_2) > c_k (T_1)$ for $k > n x_0$.

\section{Further examples for Mohar's problems}

We will use the series of $\delta$ transformations, in order to obtain a chain of
$n$-vertex trees of length $m$,
$$
S_n \cong T_0 \preceq_c T_1 \preceq_c T_2 \preceq_c \ldots \preceq_c T_{m - 1} \preceq_c T_m \cong P_n.
$$
The main idea is to move one pendent vertex attached at the center vertex of caterpillar $C_{n, d}$ to the end vertex $v_0$.
This requires $\lfloor \frac{d}{2} \rfloor$ transformations to get caterpillar $C_{n, d + 1}$, and at every step we decrease
all Laplacian coefficients.
Starting from the star $S_n$ and ending with the path $P_n$, we have
\begin{eqnarray*}
m &=& \left \lfloor \frac{2}{2} \right \rfloor + \left \lfloor \frac{3}{2} \right \rfloor + \left \lfloor \frac{4}{2} \right \rfloor +
\left \lfloor \frac{5}{2} \right \rfloor + \ldots +  \left \lfloor \frac{n - 2}{2} \right \rfloor  + \left \lfloor \frac{n - 1}{2} \right \rfloor \\
&=& 1 + 1 + 2 + 2 + \ldots + \left \lfloor \frac{n - 2}{2} \right \rfloor + \left \lfloor \frac{n - 1}{2} \right \rfloor.
\end{eqnarray*}

For $n = 2k$ it follows $m = (k - 1)^2$, while for $n = 2k + 1$ it follows $m = k (k - 1)$.
Finally, we conclude that the length of the chain is equal to
$m = \left \lfloor \frac{n - 1}{2} \right \rfloor \cdot \left \lfloor \frac{n - 2}{2} \right \rfloor$,
which is proportional to $\frac{n^2}{4}$.

\begin{table}[ht]
\centering
    \begin{tabular} {| r | r | r | r | r | r |}
    \hline
    $n$ & Tree number & Type 1 pairs & Type 2 pairs & All pairs & Percent\\
    \hline \hline
    3 & 1 & 0 & 0 & 0 & 0.00\\
    4 & 2 & 0 & 0 & 0 & 0.00 \\
    5 & 3 & 0 & 0 & 0 & 0.00 \\
    6 & 6 & 0 & 0 & 0 & 0.00 \\
    7 & 11 & 0 & 0 & 0 & 0.00 \\
    8 & 23 & 7 & 0 & 7 & 2.77 \\
    9 & 47 & 56 & 0 & 56 & 5.18 \\
    10 & 106 & 476 & 5 & 481 & 8.64 \\
    11 & 235 & 2786 & 22 & 2808 & 10.21 \\
    12 & 551 & 18857 & 230 & 19087 & 12.60 \\
    13 & 1301 & 117675 & 1756 & 119431 & 14.12 \\
    14 & 3159 & 786721 & 15203 & 801924 & 16.08 \\
    15 & 7741 & 5030105 & 109075 & 5139180 & 17.15 \\
    16 & 19320 & 33888050 & 894946 & 34782996 & 18.64 \\
    17 & 48629 & 225026865 & 6467585 & 231494450 & 19.58 \\
    18 & 123867 & 1543675765 & 50926955 & 1594602720 & 20.79\\
    \hline
    \end{tabular}
\label{tab:mohar}
\end{table}

In Table 1, we present for every $n$ between $3$ and $18$, the number of trees on $n$ vertices,
the number of pairs of trees that give affirmative answer to Problem~1 and Problem~2,
the number of all incomparable pairs and the percentage of $n$-vertex tree pairs that are incomparable.

We conclude that the smallest pair of trees that
gives the affirmative answer for Problem~1 appears on $8$ vertices, while
the smallest pair of trees for Problem~2 appears on $10$ vertices.
In \cite{ZhLvCh09} the authors did not present the smallest example for Problem~1.
Let $T_1$ and $T_2$ be two trees of order $8$ depicted on the Figure~5. It follows $T_1 \prec^1 T_2$ and $T_2 \prec^2 T_1$, since
$$
P (T_1, \lambda) = -8 x+65 x^2-190 x^3+267 x^4-196 x^5+75 x^6-14 x^7+x^8
$$
and
$$
P (T_2, \lambda) = -8 x+66 x^2-188 x^3+259 x^4-190 x^5+74 x^6-14 x^7+x^8.
$$

\begin{figure}[ht]
  \center
  \includegraphics [width = 12cm]{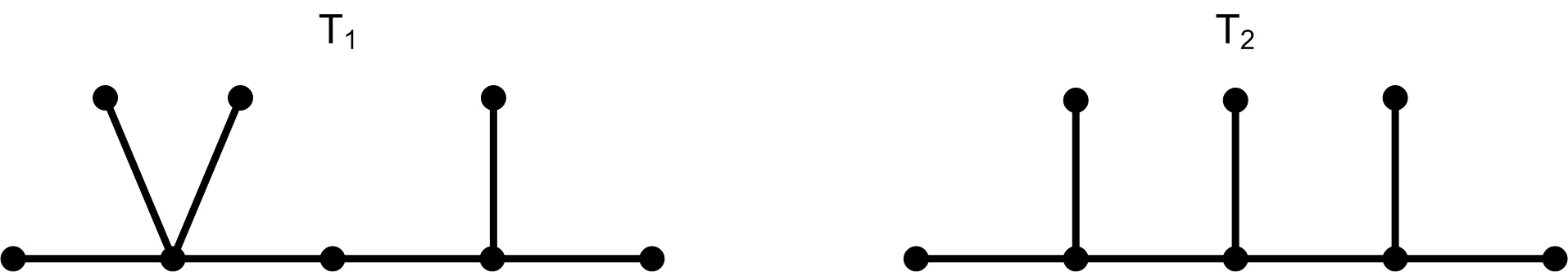}
  \caption {\textit{Trees $T_1$ and $T_2$.}}
\end{figure}

Notice that the percent of pairs of incomparable trees grows rapidly. It would be of interest
to determine the limiting ratio when $n$ tends to infinity.

\section{Laplacian coefficients of trees with perfect matchings}

It is well known that if a tree $T$ has a perfect matching,
then the perfect matching $M$ is unique. Namely, a pendent vertex $v$ has to be matched with its unique neighbor $w$,
and then $M-\{vw\}$ forms the perfect matching of tree $T-v-w$.

Let $A_{n, \Delta}$ be a $\Delta$-starlike tree $T (n - 2 \Delta, 2, 2, \ldots, 2, 1)$
consisting of a central vertex $v$,
a pendent edge, a pendent path of length $n - 2 \Delta$ and
$\Delta - 2$ pendent paths of length $2$, all attached at $v$
(see Figure~6).

\begin{figure}[ht]
  \center
  \includegraphics [width = 8cm]{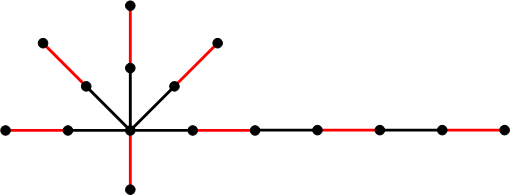}
  \caption { \textit{ The tree $A_{16, 6}$. } }
\end{figure}

\begin{thm}
The tree $A_{n, \Delta}$ has minimal Laplacian coefficients
among trees with perfect matching and maximum degree $\Delta$.
\end{thm}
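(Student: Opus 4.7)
The plan is to reduce an arbitrary tree $T$ on $n$ vertices with a perfect matching and maximum degree $\Delta$ to $A_{n,\Delta}$ through a finite sequence of $\delta$- and $\pi$-transformations, each preserving the perfect matching and the max-degree bound while weakly decreasing every Laplacian coefficient by Theorems~\ref{thm-delta} and~\ref{thm-pi}. Fix a vertex $u$ of degree $\Delta$ in $T$ and consider the $\Delta$ subtrees $S_1,\dots,S_\Delta$ hanging off $u$ through its neighbors $v_1,\dots,v_\Delta$.

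The first stage reduces each $S_i$ to a pendent path at $u$. Choose a branching vertex $v$ in $S_i$ at maximum distance from $u$. When $v$ lies strictly below $v_i$, apply the $\delta$-transformation at $v$: the pendent paths of $v$ migrate to $v$'s parent $w$, which is internal to $S_i$, leaving $\deg(u)$ unchanged. The unique perfect matching $M$ of $T$ matches $v$ with exactly one neighbor, so one can perform the $\delta$-transform while preserving that edge; a short parity check on pendent-path lengths at $v$ (exactly one odd if $v$'s match starts a pendent path, all even if $v$ is matched with $w$) confirms that $M$ extends to a perfect matching of the new tree. When the deepest branching vertex of $S_i$ equals $v_i$ itself, a $\delta$-step would push paths onto $u$ and violate the degree bound; in this case apply instead a sequence of $\pi$-transformations at $v_i$, repeatedly pairing the unique odd-length pendent path with a shortest even-length one and rerouting the matching after each step. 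Once a pendent path has been reduced to length~$0$, the degree of $v_i$ drops by one, and after sufficiently many $\pi$-steps $v_i$ becomes a degree-$2$ vertex and $S_i$ collapses to a single pendent path at $u$. Iterating within every $S_i$ yields a starlike tree at $u$ without ever exceeding the maximum degree~$\Delta$.

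The second stage invokes the majorization theorem for starlike trees. After the first stage $T$ has the form $T(p_1,\dots,p_\Delta)$ with $\sum p_i = n-1$, and the perfect matching forces exactly one $p_i$ to be odd (the path through which $u$ is matched) while the remaining $\Delta-1$ coordinates are even and hence $\geq 2$. Among sorted admissible length vectors, the one that is maximal in the majorization order~$\prec$ of Section~2 is $(n-2\Delta+2,\, 2,\, 2,\, \dots,\, 2,\, 1)$, which is precisely the length vector of $A_{n,\Delta}$: the first coordinate is as large as permitted because the other $\Delta-1$ coordinates sum to at least $2(\Delta-2)+1 = 2\Delta-3$, with equality when $\Delta-2$ of them equal $2$ and the odd one equals $1$. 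Applying the starlike majorization inequality~(\ref{eq:starlike}) then gives $c_k(A_{n,\Delta}) \leq c_k(T(p_1,\dots,p_\Delta))$ for every $k$, finishing the argument.

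The principal obstacle is the joint bookkeeping in the first stage: a careless $\delta$-transformation can raise the degree of the recipient above $\Delta$ (in particular if the recipient is $u$) or destroy the unique perfect matching by creating incompatible pendent-path parities. The dichotomy between $\delta$ and $\pi$ based on whether the deepest branching vertex is strictly below $v_i$ or coincides with $v_i$, together with the parity-aware choice of which pendent edge to retain at each step, is the technical crux; the second stage is then a short combinatorial consequence of the starlike majorization result established earlier.
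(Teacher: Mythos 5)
Your argument breaks at the second stage, and the break is fatal because the inequality you need there is false. The starlike majorization theorem, read consistently with Theorem~\ref{thm-pi} and with the corollary that the balanced starlike tree $BS_{n,k}$ \emph{minimizes} all $c_k$, says that the \emph{more balanced} length vector gives the \emph{smaller} Laplacian coefficients (each elementary step in its proof is a balancing move $q_s\mapsto q_s-1$, $q_r\mapsto q_r+1$ that decreases all $c_k$ by Theorem~\ref{thm-pi}). You correctly observe that $(n-2\Delta+2,2,\dots,2,1)$ dominates every admissible length vector in partial sums, i.e.\ it is the \emph{most unbalanced} admissible vector; but then the majorization theorem yields $c_k\bigl(T(p_1,\dots,p_\Delta)\bigr)\leqslant c_k(A_{n,\Delta})$ for every admissible $p$ --- the reverse of what you assert. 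A concrete check: for $n=8$, $\Delta=3$, the tree $T(3,2,2)$ has a perfect matching and maximum degree $3$, and its Wiener index is $c_6\bigl(T(3,2,2)\bigr)=W=72$, while $A_{8,3}=T(4,2,1)$ has $W=76$. So $c_{n-2}(A_{n,\Delta})\leqslant c_{n-2}(T(p))$ fails, and no bookkeeping in stage~1 can repair stage~2. Stage~1 has the same directional defect in milder form: collapsing a subtree $T_i$ into a single pendent path is a sequence of moves of the form $G(p,q)\to G(p+1,q-1)$ (or $q-2$), which by Theorem~\ref{thm-pi} \emph{increases} every $c_k$, so you cannot claim that each step weakly decreases all coefficients.

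In fairness, the paper's own proof takes essentially the route you chose (collapse the subtrees at a maximum-degree vertex into pendent paths, then funnel vertices two at a time into one long path) and contains the same reversal: its displayed inequality $c_k(G(p,q))\leqslant c_k(G(p+2,q-2))$ shows that each move \emph{toward} $A_{n,\Delta}$ increases the coefficients, contradicting the sentence that follows it. What both arguments actually establish is that $A_{n,\Delta}$ has \emph{maximal} Laplacian coefficients among trees with a perfect matching and maximum degree $\Delta$ --- the statement consistent with Corollary~\ref{cor-broom} on the broom and with the concluding chain $c_k(F_n)\leqslant\dots\leqslant c_k(P_n)$. If you reverse all the inequalities in your write-up and prove maximality instead, your two-stage plan (reduce to a starlike tree by matching-preserving $\pi$-steps that increase all $c_k$, then invoke the fact that the length vector of $A_{n,\Delta}$ majorizes every admissible vector) becomes a workable and in fact cleaner version of the paper's argument; as written, it proves the wrong inequality.
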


\begin{proof}
Let $T$ be an arbitrary tree with perfect matching and let $v$ be a vertex of degree $\Delta$,
with neighbors $v_1, v_2, \ldots, v_{\Delta}$.
Let $T_1, T_2, \ldots, T_{\Delta}$ be the maximal subtrees
rooted at $v_1, v_2, \ldots, v_{\Delta}$, respectively,
such that neither of these trees contains $v$.
Then at most one of the numbers $|T_1|, |T_2|, \ldots, |T_{\Delta}|$ can be odd (if $T_i$ and $T_j$
have odd number of vertices, than the root vertices $v_i$ and $v_j$ will be unmatched -- which is impossible).
Actually, since the number of vertices in $T$ is even,
there exists exactly one tree among $T_1, T_2, \dots,T_{\Delta}$ with odd number of vertices.

Using Theorem~\ref{thm-pi},
we may transform each $T_i$ into a pendent path attached at $v$ --
while simultaneously decreasing all Laplacian coefficients
and keeping the existence of a perfect matching.
Assume that $T_{\Delta}$ has odd number of vertices,
while the remaining trees have even number of vertices.
We apply similar transformation to the one in Theorem~\ref{thm-pi},
but instead of moving one edge, we move two edges
in order to keep the existence of a perfect matching.
Therefore, if $p \geqslant q \geqslant 2$ then
$$
c_k (G (p, q)) \leqslant c_k (G (p + 2, q - 2)),
$$
for all $k = 0, 1, \ldots, n$.
Using this transformation,
we may reduce $T_{\Delta}$ to one vertex,
the trees $T_2, \ldots, T_{\Delta - 1}$ to two vertices,
leaving $T_1$ with $n - 2\Delta$ vertices,
and thus obtaining $A_{n,\Delta}$.
Since we have been decreasing all Laplacian coefficients simultaneously,
we conclude that $A_{n, \Delta}$ indeed has minimal Laplacian coefficients $c_k$, $k = 0, 1, \ldots, n$,
among the trees with perfect matching.
\end{proof}

If $\Delta>2$, we
can again apply Theorem~\ref{thm-pi} (by moving two vertices) at the
vertex of degree~$\Delta$ in $A_{n,\Delta}$ and obtain
$A_{n,\Delta-1}$. Thus, it follows that
$$
c_k (F_n) = c_k (A_{n,n/2}) \leqslant c_k(A_{n,n/2 - 1}) \leqslant \ldots \leqslant c_k(A_{n, 3}) \leqslant c_k(A_{n,2}) = c_k(P_{n}),
$$
holds for every $k = 0, 1, \ldots, n$.

{\bf Acknowledgement. } This work was partially done while the author was visiting the
TOPO GROUP CLUJ, leaded by Professor Mircea V. Diudea, Babe\c{s}-Bolyai University,
Faculty of Chemistry and Chemical Engineering. We gratefully acknowledge the
suggestions from the referee that helped in improving this article.

\end{document}